\documentclass[oneside,reqno,english]{amsart}
\usepackage[T1]{fontenc}
\usepackage[latin9]{inputenc}
\usepackage{babel}
\usepackage{prettyref}
\usepackage{mathrsfs}
\usepackage{amstext}
\usepackage{amsthm}
\usepackage{amssymb}
\usepackage[pdfusetitle,
 bookmarks=true,bookmarksnumbered=false,bookmarksopen=false,
 breaklinks=false,pdfborder={0 0 0},pdfborderstyle={},backref=false,colorlinks=false]
 {hyperref}
\hypersetup{
 colorlinks=true,citecolor=blue,linkcolor=blue,linktocpage=true}

\makeatletter
\numberwithin{equation}{section}
\numberwithin{figure}{section}
\theoremstyle{plain}
\newtheorem{thm}{\protect\theoremname}[section]
\theoremstyle{remark}
\newtheorem{rem}[thm]{\protect\remarkname}
\theoremstyle{definition}
\newtheorem{defn}[thm]{\protect\definitionname}
\theoremstyle{plain}
\newtheorem{prop}[thm]{\protect\propositionname}
\theoremstyle{plain}
\newtheorem{cor}[thm]{\protect\corollaryname}

\@ifundefined{date}{}{\date{}}
\newrefformat{cor}{Corollary~\ref{#1}}
\newrefformat{subsec}{Section~\ref{#1}}
\newrefformat{lem}{Lemma~\ref{#1}}
\newrefformat{thm}{Theorem~\ref{#1}}
\newrefformat{sec}{Section~\ref{#1}}
\newrefformat{chap}{Chapter~\ref{#1}}
\newrefformat{prop}{Proposition~\ref{#1}}
\newrefformat{exa}{Example~\ref{#1}}
\newrefformat{tab}{Table~\ref{#1}}
\newrefformat{rem}{Remark~\ref{#1}}
\newrefformat{def}{Definition~\ref{#1}}
\newrefformat{fig}{Figure~\ref{#1}}
\newrefformat{claim}{Claim~\ref{#1}}

\makeatother

\providecommand{\corollaryname}{Corollary}
\providecommand{\definitionname}{Definition}
\providecommand{\propositionname}{Proposition}
\providecommand{\remarkname}{Remark}
\providecommand{\theoremname}{Theorem}

\begin{document}
\subjclass[2020]{46E22, 46L53, 47A20, 60G15, 68T07, 81P15, 81P47.}
\title[]{Operator-Valued Kernels, Machine Learning, and Dynamical Systems}
\author{Palle E.T. Jorgensen}
\address{(Palle E.T. Jorgensen) Department of Mathematics, The University of
Iowa, Iowa City, IA 52242-1419, U.S.A.}
\email{palle-jorgensen@uiowa.edu}
\author{James Tian}
\address{(James F. Tian) Mathematical Reviews, 416 4th Street Ann Arbor, MI
48103-4816, U.S.A.}
\email{jft@ams.org}
\begin{abstract}
In the context of kernel optimization, we prove a result that yields
new factorizations and realizations. Our initial context is that of
general positive operator-valued kernels. We further present implications
for Hilbert space-valued Gaussian processes, as they arise in applications
to dynamics and to machine learning. Further applications are given
in non-commutative probability theory, including a new non-commutative
Radon--Nikodym theorem.
\end{abstract}

\keywords{Positive definite functions, Gaussian processes, covariance, dilation,
non-commutative Radon-Nikodym derivatives, completely positive maps,
measurement, quantum states, quantum gates, kernel method. }

\maketitle
\tableofcontents{}

\section{\protect\label{sec:1}Introduction}

Machine learning, particularly kernel methods, heavily relies on the
concept of positive definite (p.d.) kernels. These kernels allow the
transformation of data into higher-dimensional spaces where linear
separability is possible, leading to effective classification and
regression models. The positive operator-valued kernels discussed
in our paper extend this idea to more complex structures, such as
Hilbert space-valued Gaussian processes. These processes enable the
construction of probabilistic models that can handle uncertainty and
provide predictive distributions.

For instance, in support vector machines (SVMs) and Gaussian process
regression (GPR), the choice and optimization of kernels are crucial
for model performance. Our factorization results provide new ways
to construct and optimize kernels, potentially leading to more efficient
algorithms and improved generalization in machine learning models.
This is mainly due to the induced scalar-valued counterparts of these
operator-valued kernels. More specifically, for a fixed $\mathcal{L}\left(H\right)$-valued
kernel $K\left(s,t\right)$, $\left(s,t\right)\in S\times S$, there
is a family of scalar-valued kernels $K_{\rho}\left(s,t\right)=\text{Trace}\left(\rho K\left(s,t\right)\right)$,
indexed by positive trace class operators $\rho$ in the Hilbert space
$H$. This, in turn, yields the following kernel optimization model:
\begin{equation}
\min_{f,\rho}\left\{ \sum_{i}\left|f\left(s_{i}\right)-c_{i}\right|^{2}+\beta\left\Vert f\right\Vert _{H_{K_{\rho}}}:f\in H_{K_{\rho}},\rho>0\right\} \label{eq:a1}
\end{equation}
in which $H_{K_{\rho}}$ is the reproducing Hilbert space (RKHS) of
$K_{\rho}$. 

In a Bayesian interpretation of \eqref{eq:a1}, we impose a prior
belief on the structure of the kernel function through the constraint
$\rho$. This positive operator $\rho$ can be seen as defining a
distribution over the ambient space of kernel functions (assuming
$\text{Trace}\left(\rho\right)=1$), reflecting prior knowledge or
assumptions about the functions we aim to model.

The data fit term in \eqref{eq:a1} plays the role of the likelihood
function. It assesses how well a particular kernel function, determined
by $\rho$, explains the observed data. Essentially, this term measures
the likelihood of the data given the chosen kernel. By combining the
prior and likelihood, we can derive a posterior distribution over
the space of kernel functions. This posterior represents our updated
belief about the kernel after observing the data. The solution to
our optimization problem \eqref{eq:a1} can then be interpreted as
a Maximum a Posteriori (MAP) estimate, meaning that a solution corresponds
to the kernel function that is most probable under the posterior distribution,
given the observed data and our prior assumptions.

\textbf{Main results.} In this paper, motivated by the model \eqref{eq:a1},
we present a result that offers a canonical link between a general
setting for operator-valued completely positive maps and their induced
scalar-valued counterparts. We further provide applications to a variety
of neighboring areas, including the following six closely interrelated
topics: (i) operator-valued Gaussian processes and their associated
covariance structures; (ii) universal factorizations; (iii) non-commutative
operator-valued Radon-Nikodym derivatives and their applications to
quantum gates and quantum states; (iv) partial orders on operator-valued
completely positive maps via their associated reproducing kernel Hilbert
spaces; (v) intertwining operators for representations induced from
completely positive maps; and (vi) applications of (iii) to completely
positive maps and associated quantum gates.

\textbf{Organization.} In \prettyref{sec:2} we present the general
framework, and the main theorems, for operator-valued completely positive
maps, as well as their associated structures, including operator-valued
Gaussian processes. The applications to non-commutative operator-valued
Radon-Nikodym derivatives are discussed in \prettyref{sec:3}, while
the focus in \prettyref{sec:4} is that of completely positive maps.

\textbf{Literature.} For the reader\textquoteright s benefit, we include
the following citations across various relevant topics: For the theory
of positive definite kernels \cite{MR4439542,MR4414884,MR4201338},
Hilbert space-valued Gaussian processes \cite{MR4509085,MR3959187,MR3699787},
completely positive maps \cite{MR4654017,MR4526376,MR4287855}, quantum
gates \cite{MR4740581,MR4737207,MR4733151,MR4703397}, and operator-valued
Radon-Nikodym derivatives \cite{MR4137283}.

The literature on the theory of reproducing kernels is vast, encompassing
both theoretical foundations and recent applications. For a comprehensive
overview, including the latest developments, we refer to the following
resources: \cite{MR80878,MR31663,MR4302453,MR3526117,MR1305949,MR4690276,MR3700848,MR1126127,MR1200633,MR3402823}.

We also recommend two pioneering books for the general theory of dilations
of irreversible evolutions in algebraic quantum theory: \cite{MR489494,MR582649}.
These books cover both the theoretical aspects and applications of
completely positive maps, including semigroups.

Lastly, for insights into machine learning, particularly the kernel
approach and its dynamical perspective, we refer to \cite{MR4736516,MR4690614,MR4675283,MR4676260,MR4648377,MR4586825,MR4291375}
and the papers cited therein. 

\textbf{Notation.} Throughout the paper, we use the physics convention
that inner products are linear in the second variable. Let $\left|a\left\rangle \right\langle b\right|$
denote Dirac's rank-1 operator, $c\mapsto a\left\langle b,c\right\rangle $.
$\mathcal{L}\left(H\right)$ denotes the algebra of all bounded linear
operators on a Hilbert space $H$. 

For a positive definite (p.d.) kernel $K:S\times S\rightarrow\mathbb{C}$,
let $H_{K}$ be the corresponding reproducing kernel Hilbert space
(RKHS). $H_{K}$ is the Hilbert completion of 
\begin{equation}
span_{\mathbb{C}}\left\{ K_{y}\left(\cdot\right):=K\left(\cdot,y\right)\mid y\in S\right\} \label{eq:A1}
\end{equation}
with respect to the inner product 
\begin{equation}
\left\langle \sum_{i}c_{i}K\left(\cdot,x_{i}\right),\sum_{i}d_{j}K\left(\cdot,y_{j}\right)\right\rangle _{H_{K}}:=\sum_{i}\sum_{j}\overline{c}_{i}d_{j}K\left(x_{i},x_{j}\right).\label{eq:A2}
\end{equation}
The following reproducing property holds: 
\begin{equation}
\varphi\left(x\right)=\left\langle K\left(\cdot,x\right),\varphi\right\rangle _{H_{K}},\forall x\in S,\:\forall\varphi\in H_{K}.\label{eq:A3}
\end{equation}

Any scalar-valued kernel $K$ as above is associated with a zero-mean
Gaussian process $W_{s}\sim N\left(0,K\left(s,s\right)\right)$, where
$K$ is the covariance: 
\begin{equation}
K\left(s,t\right)=\mathbb{E}\left[\overline{W_{s}}W_{t}\right].\label{eq:A4}
\end{equation}

An $\mathcal{L}\left(H\right)$-valued kernel $K:S\times S\rightarrow\mathcal{L}\left(H\right)$
is p.d. if 
\begin{equation}
\sum_{i=1}^{n}\left\langle a_{i},K\left(s_{i},s_{j}\right)a_{j}\right\rangle _{H}\geq0\label{eq:A5}
\end{equation}
for all $\left(a_{i}\right)_{1}^{n}$ in $H$, and all $n\in\mathbb{N}$.

\section{\protect\label{sec:2}$\mathcal{L}\left(H\right)$-valued kernels
and $H$-valued Gaussian processes}

In passing from kernels to kernel Hilbert spaces, there are two approaches:
one is abstract, and the other is constructive. The first approach
is an abstract Hilbert completion. Despite its drawbacks, this has
been the method used in the literature until now. The second approach
is constructive. It is new, and we present it below. As we show in
the subsequent sections of our paper, the constructive design is well-suited
to a variety of applications in machine learning and dynamics.

We begin by revisiting a universal construction for operator-valued
positive definite kernels. This construction will be adapted for various
applications, which will be explored in the following sections.
\begin{thm}[Universal Factorization]
\label{thm:b2} Let $K:S\times S\rightarrow\mathcal{L}\left(H\right)$
be a p.d. kernel. Then there exists a RKHS $H_{\tilde{K}}$, and a
family of operators $V\left(s\right):H\rightarrow H_{\tilde{K}}$,
$s\in S$, such that 
\begin{equation}
H_{\tilde{K}}=\overline{span}\left\{ V\left(s\right)a:a\in H,s\in S\right\} 
\end{equation}
and 
\begin{equation}
K\left(s,t\right)=V\left(s\right)^{*}V\left(t\right).\label{eq:b2}
\end{equation}

Conversely, if there is a Hilbert space $L$ and operators $V\left(s\right):H\rightarrow L$,
$s\in S$, such that 
\begin{equation}
L=\overline{span}\left\{ V\left(s\right)a:a\in H,s\in S\right\} \label{eq:b3}
\end{equation}
and \eqref{eq:b2} holds, then $L\simeq H_{\tilde{K}}$. 
\end{thm}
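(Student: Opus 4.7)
The idea is to reduce the operator-valued factorization to the familiar scalar RKHS construction \eqref{eq:A1}--\eqref{eq:A3} by lifting $K$ to a scalar-valued kernel $\tilde{K}$ on the enlarged index set $S\times H$. Concretely, I would define
\[
\tilde{K}\bigl((s,a),(t,b)\bigr):=\langle a,K(s,t)b\rangle_{H},\qquad (s,a),(t,b)\in S\times H,
\]
and check that it is positive definite by absorbing scalar coefficients: for $c_{1},\dots,c_{n}\in\mathbb{C}$ and $(s_{i},a_{i})\in S\times H$, set $\tilde{a}_{i}:=c_{i}a_{i}$ and apply \eqref{eq:A5}. Let $H_{\tilde{K}}$ be the scalar RKHS associated to $\tilde{K}$ via \eqref{eq:A1}--\eqref{eq:A2}.

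Next, I would define the operators $V(s)\colon H\to H_{\tilde{K}}$ by
\[
V(s)a:=\tilde{K}_{(s,a)}=\tilde{K}\bigl((\cdot,\cdot),(s,a)\bigr).
\]
Linearity in $a$ follows from the fact that $b\mapsto\langle a',K(s',s)b\rangle_{H}$ is linear under the physics convention, and boundedness with $\|V(s)\|^{2}\le\|K(s,s)\|_{\mathcal{L}(H)}$ drops out of the reproducing property \eqref{eq:A3}. The factorization \eqref{eq:b2} is then a one-line computation using reproducing:
\[
\langle a,V(s)^{*}V(t)b\rangle_{H}=\langle V(s)a,V(t)b\rangle_{H_{\tilde{K}}}=\tilde{K}\bigl((s,a),(t,b)\bigr)=\langle a,K(s,t)b\rangle_{H},
\]
and the spanning statement is built into the construction of $H_{\tilde{K}}$.

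For the converse, given any $L$ and $V'(s)\colon H\to L$ satisfying \eqref{eq:b3} together with $V'(s)^{*}V'(t)=K(s,t)$, I would define $U\colon L\to H_{\tilde{K}}$ on the dense span by $U\bigl(\sum_{i}V'(s_{i})a_{i}\bigr):=\sum_{i}V(s_{i})a_{i}$. The Gram-matrix identity
\[
\Bigl\|\sum_{i}V'(s_{i})a_{i}\Bigr\|_{L}^{2}=\sum_{i,j}\langle a_{i},K(s_{i},s_{j})a_{j}\rangle_{H}=\Bigl\|\sum_{i}V(s_{i})a_{i}\Bigr\|_{H_{\tilde{K}}}^{2}
\]
simultaneously forces well-definedness (the zero input forces zero output) and isometry; extending by continuity and invoking \eqref{eq:b3} for density then produces a unitary $L\simeq H_{\tilde{K}}$. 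The only real care needed is bookkeeping with the physics convention so that $a\mapsto V(s)a$ is genuinely linear rather than conjugate-linear; once that is arranged, there is no serious obstacle, and the rest is a routine reproducing-kernel manipulation.
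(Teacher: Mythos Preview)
Your proposal is correct and follows exactly the route the paper takes: define the scalar kernel $\tilde{K}\bigl((s,a),(t,b)\bigr)=\langle a,K(s,t)b\rangle_{H}$, set $V(s)a=\tilde{K}_{(s,a)}\in H_{\tilde{K}}$, and read off the factorization from the reproducing property; the paper only sketches these steps and refers elsewhere for details, so your write-up of the converse via the Gram-matrix isometry in fact fills in more than the paper does.
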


\begin{proof}
For a detailed proof, we refer the reader to \cite{JT2024canonical}.
We provide a brief outline of the main steps below: 

Given a p.d. kernel $K:S\times S\rightarrow\mathcal{L}\left(H\right)$,
define $\tilde{K}:\left(S\times H\right)\times\left(S\times H\right)\rightarrow\mathbb{C}$
as 
\begin{equation}
\tilde{K}\left(\left(s,a\right),\left(t,b\right)\right):=\left\langle a,K\left(s,t\right)b\right\rangle _{H}.\label{eq:b4}
\end{equation}
Then $\tilde{K}$ is a scalar-valued p.d. kernel. Let $H_{\tilde{K}}$
be the associated RKHS.

Set $V\left(s\right):H\rightarrow H_{\tilde{K}}$ by 
\begin{equation}
V\left(s\right)a=\tilde{K}_{\left(s,a\right)}=\tilde{K}\left(\cdot,\left(s,a\right)\right):\left(S\times H\right)\rightarrow\mathbb{C},\quad\forall a\in H.\label{eq:b5}
\end{equation}
One verifies that 
\[
V\left(s\right)^{*}\tilde{K}\left(\cdot,\left(t,b\right)\right)=K\left(s,t\right)b.
\]
and the factorization $K\left(s,t\right)=V\left(s\right)^{*}V\left(t\right)$
holds.
\end{proof}
\begin{rem}
The setting in \prettyref{thm:b2} includes and extends well known
constructions in classical dilation theory. Notably, transitioning
from an operator-valued kernel $K$ to the scalar-valued positive
definite kernel $\tilde{K}$ enables a function-based approach to
dilation theory, contrasting with the traditional abstract spaces
of equivalence classes. The literature is vast, and here we call attention
to \cite{MR2743416}, and the papers cited there.
\end{rem}

Building upon Ito\textquoteright s seminal work, Hilbert spaces and
their corresponding operators have become essential tools in the stochastic
analysis of Gaussian processes. For more detailed insights, see e.g.,
\cite{MR3402823,MR4302453}, as well as the earlier works cited therein.
We explore two principal types of Gaussian processes in this field:
(i) scalar-valued processes that are indexed by a Hilbert space, typically
through an Ito-isometry; and (ii) processes where the Gaussian values
are embedded directly within a Hilbert space (see e.g., \cite{MR4641110,MR4414825,MR4101087,MR4073554,MR3940383,doi:10.1142/S0219025723500200}).
This paper concentrates on the second type, which provides enhanced
adaptability in modeling covariance structures---a key element in
processing large datasets. We now proceed to discuss the details of
this approach.
\begin{thm}
Every operator-valued p.d. kernel $K:S\times S\rightarrow\mathcal{L}\left(H\right)$
is associated with an $H$-valued Gaussian process $\left\{ W\left(s\right)\right\} _{s\in S}$
with zero-mean, realized on some probability space $\left(\Omega,\mathbb{P}\right)$,
such that 
\begin{equation}
K\left(s,t\right)=\int_{\Omega}\left|W\left(s\right)\left\rangle \right\langle W\left(t\right)\right|d\mathbb{P}.\label{eq:b6}
\end{equation}
Conversely, every $H$-valued Gaussian process is obtained from such
a $\mathcal{L}\left(H\right)$-valued kernel. 
\end{thm}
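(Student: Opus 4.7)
My plan is to reduce the construction of the $H$-valued Gaussian process to the classical scalar-valued theory via the scalarization $\tilde{K}$ of \prettyref{thm:b2}. First I would apply that theorem to obtain the scalar-valued p.d.\ kernel
\[
\tilde K\bigl((s,a),(t,b)\bigr)=\langle a,K(s,t)b\rangle_{H}
\]
on the index set $S\times H$. Then, by Kolmogorov's extension theorem applied to the centered complex Gaussian family with covariance $\tilde K$, I would realize on some probability space $(\Omega, \mathbb{P})$ a zero-mean Gaussian field $\{Z_{(s,a)}\}_{(s,a)\in S\times H}$ with
\[
\mathbb{E}\bigl[\,\overline{Z_{(s,a)}}\,Z_{(t,b)}\,\bigr]=\tilde K\bigl((s,a),(t,b)\bigr).
\]

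Next I would upgrade this scalar field to an $H$-valued object by exploiting the sesquilinearity of $\tilde K$ in its $H$-arguments. A direct second-moment computation using the above covariance shows, for all $\alpha,\beta\in\mathbb{C}$ and $a,a'\in H$,
\[
Z_{(s,\alpha a+\beta a')}=\alpha\,Z_{(s,a)}+\beta\,Z_{(s,a')}\quad\text{a.s.},
\]
so, after passing to a linear version, $a\mapsto Z_{(s,a)}$ is $\mathbb{C}$-linear on $H$ for each fixed $s$. I would then write $Z_{(s,a)}=\langle W(s),a\rangle_{H}$ (physics convention), interpret $W(s)$ as a (possibly cylindrical) $H$-valued Gaussian vector, and define the outer product $|W(s)\rangle\langle W(t)|$ through its matrix elements $\langle a,|W(s)\rangle\langle W(t)|\,b\rangle=\langle a,W(s)\rangle\,\langle W(t),b\rangle$. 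The required identity \eqref{eq:b6} then reads, in the weak sense,
\[
\langle a,K(s,t)b\rangle_{H}=\int_{\Omega}\langle a,W(s)\rangle\,\langle W(t),b\rangle\,d\mathbb{P},
\]
which is precisely the covariance defining $Z$.

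For the converse, given an arbitrary zero-mean $H$-valued Gaussian process $\{W(s)\}_{s\in S}$, I would \emph{define} $K$ via \eqref{eq:b6} (interpreted weakly) and verify positive definiteness \eqref{eq:A5} through the square-expansion
\[
\sum_{i,j=1}^{n}\langle a_{i},K(s_{i},s_{j})a_{j}\rangle_{H}=\int_{\Omega}\Bigl|\,\sum_{i=1}^{n}\langle W(s_{i}),a_{i}\rangle\,\Bigr|^{2}\,d\mathbb{P}\ge 0,
\]
together with boundedness of each $K(s,t)\in\mathcal{L}(H)$ by Cauchy--Schwarz on the right-hand side of \eqref{eq:b6}.

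The step I expect to be the principal obstacle is making rigorous sense of ``$H$-valued'' when $H$ is infinite-dimensional and $K(s,s)$ is not of trace class: the naive series $\sum_{k}\overline{Z_{(s,e_{k})}}\,e_{k}$ in an orthonormal basis $\{e_{k}\}$ has expected squared norm $\mathrm{Tr}\,K(s,s)$, which may be infinite, so it fails to converge in $H$. I would address this by adopting the cylindrical Gaussian formalism---equivalently, realizing $W(s)$ in a larger Gelfand-triple completion of $H$---so that the weak identity \eqref{eq:b6} still recovers $K$ and every verification reduces to a scalar second-moment calculation on the family $\{Z_{(s,a)}\}$.
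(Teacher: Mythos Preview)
Your argument is correct, but the route differs from the paper's. The paper does not invoke Kolmogorov's extension theorem; instead it uses the factorization $K(s,t)=V(s)^{*}V(t)$ of \prettyref{thm:b2} to write down the process explicitly as a Karhunen--Lo\`eve type series
\[
W(t)=\sum_{i}\bigl(V(t)^{*}\varphi_{i}\bigr)Z_{i},
\]
where $(\varphi_{i})$ is an orthonormal basis of $H_{\tilde K}$ and $(Z_{i})$ are i.i.d.\ $N(0,1)$. The covariance identity \eqref{eq:b6} then drops out of a one-line Parseval computation (\prettyref{thm:c3}). Your approach builds the scalar field $\{Z_{(s,a)}\}$ abstractly and then reconstructs $W(s)$ from the $a$-linearity; the paper builds $W(t)$ first and reads off the scalar field as $\langle a,W(s)\rangle_{H}$. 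The paper's construction is more explicit and immediately compatible with the later Radon--Nikodym manipulations (one simply replaces $\varphi_{i}$ by $T^{1/2}\varphi_{i}$), while your Kolmogorov route is cleaner about existence and makes the converse direction more transparent. Both approaches run into the same trace-class obstruction you flag; the paper addresses it only briefly via a remark on the convergence of the series, essentially at the same level of rigor as your cylindrical interpretation.
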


\begin{rem}
More precisely, the identity \eqref{eq:b6} holds in the sense that
\begin{equation}
\mathbb{E}\left[\left\langle a,W\left(s\right)\right\rangle _{H}\left\langle W\left(t\right),b\right\rangle _{H}\right]=\left\langle a,K\left(s,t\right)b\right\rangle _{H}.
\end{equation}
for all $s,t\in S$, and all $a,b\in H$. For a detailed proof, see
\cite{JT2024canonical}. 
\end{rem}

\begin{defn}
Consider the $H$-valued Gaussian process $W_{s}:\Omega\rightarrow H$,
\begin{equation}
W\left(t\right)=\sum_{i}\left(V\left(t\right)^{*}\varphi_{i}\right)Z_{i},\label{eq:d8}
\end{equation}
where $\left(\varphi_{i}\right)$ is an orthonormal basis (ONB) in
$H_{\tilde{K}}$. Following standard conventions, here $\left\{ Z_{i}\right\} $
refers to a choice of an independent identically distributed (i.i.d.)
system of standard scalar Gaussian random variables, i.e., $Z_{i}\sim N(0,1)$,
and with an index matching the choice of ONB.
\end{defn}

\begin{thm}
\label{thm:c3}We have 
\begin{equation}
\mathbb{E}\left[\left\langle a,W\left(s\right)\right\rangle _{H}\left\langle W\left(t\right),b\right\rangle _{H}\right]=\left\langle a,K\left(s,t\right)b\right\rangle _{H}.\label{eq:d9}
\end{equation}
\end{thm}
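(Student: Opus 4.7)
The plan is to substitute the series definition \eqref{eq:d8} into the left-hand side of \eqref{eq:d9}, pull out the coefficient $Z_i$'s, and reduce the double sum to a single Parseval sum using the independence/orthonormality of the $Z_i$'s. More concretely, for fixed $a\in H$ and $s\in S$, linearity of the inner product gives
\[
\left\langle a,W(s)\right\rangle_H=\sum_i Z_i\,\bigl\langle a,V(s)^{*}\varphi_i\bigr\rangle_H=\sum_i Z_i\,\bigl\langle V(s)a,\varphi_i\bigr\rangle_{H_{\tilde K}},
\]
and similarly
\[
\left\langle W(t),b\right\rangle_H=\sum_j Z_j\,\overline{\bigl\langle V(t)b,\varphi_j\bigr\rangle_{H_{\tilde K}}}.
\]
Forming the product and taking expectation term-by-term, the key identity is $\mathbb{E}[Z_i Z_j]=\delta_{ij}$, which collapses the double sum to
\[
\sum_i \bigl\langle V(s)a,\varphi_i\bigr\rangle_{H_{\tilde K}}\,\overline{\bigl\langle V(t)b,\varphi_i\bigr\rangle_{H_{\tilde K}}}.
\]

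From here, I would apply Parseval's identity in $H_{\tilde K}$ relative to the ONB $(\varphi_i)$, which gives $\left\langle V(s)a,V(t)b\right\rangle_{H_{\tilde K}}$. Moving $V(t)$ across to $V(s)$ by adjoint and invoking the factorization $K(s,t)=V(s)^{*}V(t)$ from \prettyref{thm:b2} yields $\bigl\langle a,K(s,t)b\bigr\rangle_H$, as required.

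The main technical obstacle is justifying the termwise interchange of the infinite sum with the expectation and with the inner product. The clean way to handle this is to argue at the scalar level: since $V(s)a\in H_{\tilde K}$, the sequence of Fourier coefficients $\bigl(\langle V(s)a,\varphi_i\rangle\bigr)_i$ lies in $\ell^2$, so $\sum_i Z_i\,\langle V(s)a,\varphi_i\rangle$ converges in $L^2(\Omega)$ to a centered Gaussian random variable with variance $\|V(s)a\|^2_{H_{\tilde K}}$; hence the $H$-valued expansion of $W(s)$ only needs to be interpreted weakly (against $a\in H$), and the product $\langle a,W(s)\rangle\langle W(t),b\rangle$ lies in $L^1(\Omega)$ by Cauchy--Schwarz. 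With these convergences in place, the formal calculation above is rigorous, completing the proof.
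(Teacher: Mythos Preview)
Your argument is correct and follows exactly the same route as the paper: expand $W(s)$ and $W(t)$ via \eqref{eq:d8}, use $\mathbb{E}[Z_iZ_j]=\delta_{ij}$ to collapse the double sum, apply Parseval in $H_{\tilde K}$, and then invoke $K(s,t)=V(s)^{*}V(t)$. The only addition is your justification of the interchange of sum and expectation via $L^{2}(\Omega)$-convergence of the Fourier series, which the paper leaves implicit.
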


\begin{proof}
Note that $\mathbb{E}\left[Z_{i}Z_{j}\right]=\delta_{i,j}$. From
this, we get
\begin{align*}
\text{LHS}_{\left(\ref{eq:d9}\right)} & =\mathbb{E}\left[\left\langle a,W\left(s\right)\right\rangle _{H}\left\langle W\left(t\right),b\right\rangle _{H}\right]\\
 & =\sum_{i}\sum_{j}\left\langle a,V\left(s\right)^{*}\varphi_{i}\right\rangle \left\langle V\left(t\right)^{*}\varphi_{j},b\right\rangle \mathbb{E}\left[Z_{i}Z_{j}\right]\\
 & =\sum_{i}\left\langle V\left(s\right)a,\varphi_{i}\right\rangle _{H_{\tilde{K}}}\left\langle \varphi_{i},V\left(t\right)b\right\rangle _{H_{\tilde{K}}}\\
 & =\left\langle V\left(s\right)a,V\left(t\right)b\right\rangle _{H_{\tilde{K}}}=\left\langle a,V\left(s\right)^{*}V\left(t\right)b\right\rangle _{H}=\left\langle a,K\left(s,t\right)b\right\rangle _{H}.
\end{align*}
\end{proof}
\begin{rem}
The Gaussian process $\left\{ W\left(t\right)\right\} _{t\in S}$
in \eqref{eq:d8} is well defined and possesses the stated properties.
This is an application of the central limit theorem to the choice
$\left\{ Z_{i}\right\} $ of i.i.d. $N\left(0,1\right)$ Gaussians
on the right-side of \eqref{eq:d8}.

Varying the choices of ONBs $\left(\varphi_{i}\right)$ and i.i.d.
$N\left(0,1\right)$ Gaussian random variables $\left(Z_{i}\right)$
will result in different Gaussian processes $\left\{ W\left(t\right)\right\} _{t\in S}$,
but all will adhere to the covariance condition specified in \eqref{eq:d9}.
Importantly, once the ONB $\left(\varphi_{i}\right)$ and the random
variables $\left(Z_{i}\right)$ are fixed, the resulting Gaussian
process $\left\{ W\left(t\right)\right\} _{t\in S}$ is uniquely determined
by its first two moments (mean and covariance).
\end{rem}

\section{\protect\label{sec:3}Dynamics and a non-commutative Radon--Nikodym
theorem}

As is known, the theory of von Neumann algebras offers a framework
for non-commutative measure theory, see e.g., \cite{MR174992}. In
this interpretation, the projections in the algebra are the characteristic
functions of the (non-commuting) \textquotedblleft measurable sets.\textquotedblright{}
From the elements of the Hilbert space $H$ we then build the bounded
measurable functions; and the (normal) states are the probability
measures on the underlying (non-commutative) measure space. Here we
shall supplement this theory with Aronszajn\textquoteright s notion
of systems of ordered p.d. kernels and contractive containment of
the corresponding reproducing kernel Hilbert spaces (RKHSs.) From
this we then build a natural (but different) notion of non-commutative
Radon--Nikodym derivatives. 

For the sake of completeness, we include a proof of Aronszajn\textquoteright s
inclusion theorem below. It states that, for two (scalar-valued) p.d.
kernels $K$ and $L$ on $S\times S$, $K\leq L$ (i.e., $L-K$ is
p.d.) if and only if $H_{K}$ is contractively contained in $H_{L}$
(see e.g., \cite{MR51437}). 
\begin{thm}[Aronszajn]
\label{thm:21}If $K\leq L$, then $H_{K}\subset H_{L}$ and $\left\Vert f\right\Vert _{H_{L}}\leq\left\Vert f\right\Vert _{H_{K}}$
for all $f\in H_{K}$. 
\end{thm}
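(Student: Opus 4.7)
The plan is to build a contraction from $H_L$ to $H_K$ on the kernel sections, and then obtain the desired contractive inclusion $H_K \hookrightarrow H_L$ by passing to the adjoint. This is the standard trick that turns a pointwise/matrix inequality between p.d. kernels into a Hilbert-space-level contraction statement, and it meshes well with the factorization viewpoint developed in \prettyref{thm:b2}.

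Concretely, I would define $T$ on the dense subspace $\mathrm{span}\{L_y:y\in S\}\subset H_L$ by $T(L_y)=K_y\in H_K$ and extend by linearity. The first step is to check that $T$ is well-defined and contractive simultaneously: for a finite combination $f=\sum_i c_i L_{x_i}$, one computes
\begin{equation*}
\|f\|_{H_L}^{2}=\sum_{i,j}\overline{c_i}c_j L(x_i,x_j),\qquad \|Tf\|_{H_K}^{2}=\sum_{i,j}\overline{c_i}c_j K(x_i,x_j),
\end{equation*}
and the hypothesis $K\leq L$ says exactly that the first quantity dominates the second. In particular $\|f\|_{H_L}=0$ forces $\|Tf\|_{H_K}=0$, which both makes $T$ well-defined on the quotient-modulo-kernel and shows $\|T\|\leq 1$. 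Hence $T$ extends to a contraction $T:H_L\to H_K$.

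Next I would pass to the adjoint $T^{*}:H_K\to H_L$, which is again a contraction. To identify $T^{*}$ with the inclusion, test it against the reproducing sections of $H_L$: for any $f\in H_K$ and $y\in S$, using the physics convention and the reproducing property \eqref{eq:A3} for both spaces,
\begin{equation*}
(T^{*}f)(y)=\langle L_y,T^{*}f\rangle_{H_L}=\langle TL_y,f\rangle_{H_K}=\langle K_y,f\rangle_{H_K}=f(y).
\end{equation*}
Thus $T^{*}f=f$ as a function on $S$, so $f\in H_L$, giving the set-theoretic inclusion $H_K\subset H_L$; and the norm bound $\|f\|_{H_L}=\|T^{*}f\|_{H_L}\leq\|f\|_{H_K}$ is immediate from $\|T^{*}\|\leq 1$.

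The only genuine subtlety, which I would be careful to spell out, is the well-definedness step: showing that if $\sum_i c_i L_{x_i}=0$ in $H_L$ then $\sum_i c_i K_{x_i}=0$ in $H_K$. This is not automatic, but it follows from writing $L=K+(L-K)$ and noting that both summands are p.d.; the two nonnegative quadratic forms $\sum\overline{c_i}c_j K(x_i,x_j)$ and $\sum\overline{c_i}c_j(L-K)(x_i,x_j)$ add to zero, so each vanishes separately. Everything else — boundedness of $T$, density of the span of $\{L_y\}$ in $H_L$, and the adjoint computation — is routine once the reproducing property and the convention on linearity of the inner product are kept straight.
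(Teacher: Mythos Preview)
Your argument is correct, but it takes a different route from the paper's. The paper proceeds directly: for $f\in H_K$ it invokes the standard membership criterion for $H_L$ (namely, $f\in H_L$ iff $\bigl|\sum_i c_i f(s_i)\bigr|^{2}\le C\bigl\|\sum_i c_i L_{s_i}\bigr\|_{H_L}^{2}$ for all finite systems), and checks this via the reproducing property, Cauchy--Schwarz in $H_K$, and the quadratic-form inequality $K\le L$; the constant that emerges is exactly $\|f\|_{H_K}^{2}$, which simultaneously yields the norm estimate. Your approach instead builds the contraction $T:H_L\to H_K$, $L_y\mapsto K_y$, and identifies its adjoint with the inclusion. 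The paper's proof is shorter and avoids the well-definedness discussion, but it leans on the RKHS membership criterion as a black box. Your route is a bit more self-contained and has the pleasant feature that it manufactures the inclusion $j=T^{*}:H_K\hookrightarrow H_L$ explicitly; in particular $T^{*}T:H_L\to H_L$ is precisely the Radon--Nikodym operator $jj^{*}$ of \prettyref{prop:f1}, so your construction dovetails with what comes next.
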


\begin{proof}
Let $f\in H_{K}$. To verify that $f\in H_{L}$, it is equivalent
to show that 
\[
\left|\sum c_{i}f\left(s_{i}\right)\right|^{2}\leq\text{const}\cdot\left\Vert \sum c_{i}L_{s_{i}}\right\Vert _{H_{L}}^{2}
\]
for all $\left(c_{i}\right)_{i=1}^{n}$ in $\mathbb{C}$, $\left(s_{i}\right)_{i=1}^{n}$
in $S$, and $n\in\mathbb{N}$. This follows from the reproducing
property, and the assumption $K\leq L$: 
\begin{align*}
\left|\sum c_{i}f\left(s_{i}\right)\right|^{2} & =\left|\left\langle \sum\overline{c_{i}}K_{s_{i}},f\right\rangle _{H_{K}}\right|^{2}\leq\left\Vert f\right\Vert _{H_{K}}^{2}\left\Vert \sum\overline{c_{i}}K_{s_{i}}\right\Vert _{H_{K}}^{2}\\
 & \leq\left\Vert f\right\Vert _{H_{K}}^{2}\sum\overline{c_{i}}c_{j}K\left(s_{i},s_{j}\right)\leq\left\Vert f\right\Vert _{H_{K}}^{2}\sum\overline{c_{i}}c_{j}L\left(s_{i},s_{j}\right)\\
 & =\left\Vert f\right\Vert _{H_{K}}^{2}\left\Vert \sum c_{i}L_{s_{i}}\right\Vert _{H_{L}}^{2}.
\end{align*}
\end{proof}
\begin{prop}
\label{prop:f1}Suppose $K,L$ are p.d. on $S\times S$, and $K\leq L$.
There exists a unique positive selfadjoint operator $T$ on $H_{L}$,
such that $0\leq T\leq I_{H_{L}}$, and 

\begin{equation}
K\left(s,t\right)=\left\langle T^{1/2}L_{s},T^{1/2}L_{t}\right\rangle _{H_{L}},\quad s,t\in S.\label{eq:t4-1}
\end{equation}
\end{prop}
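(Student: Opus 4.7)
The plan is to build $T$ as the composition of the contractive inclusion $J:H_K\hookrightarrow H_L$ guaranteed by \prettyref{thm:21} with its adjoint, so $T:=JJ^{*}\in\mathcal{L}(H_L)$. By construction $T$ is positive selfadjoint, and $\|T\|=\|J\|^{2}\leq 1$, so $0\leq T\leq I_{H_L}$. The positive square root $T^{1/2}$ is then the unique positive operator whose square is $T$.

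The heart of the argument is to identify $J^{*}$ on the reproducing kernel vectors. Fixing $t\in S$ and any $f\in H_K$, using the adjoint relation together with both reproducing properties (with the physics convention that the inner product is linear in the second slot),
\begin{equation}
\langle J^{*}L_t,\,f\rangle_{H_K}=\langle L_t,\,Jf\rangle_{H_L}=(Jf)(t)=f(t)=\langle K_t,\,f\rangle_{H_K},
\end{equation}
so $J^{*}L_t=K_t$ as elements of $H_K$. Consequently $TL_t=JK_t$, which is simply $K_t$ viewed as an element of $H_L$. Applying the reproducing property in $H_L$ once more yields
\begin{equation}
\langle T^{1/2}L_s,\,T^{1/2}L_t\rangle_{H_L}=\langle L_s,\,TL_t\rangle_{H_L}=\langle L_s,\,K_t\rangle_{H_L}=K_t(s)=K(s,t),
\end{equation}
which is \eqref{eq:t4-1}.

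For uniqueness, suppose $T'$ is another positive selfadjoint operator on $H_L$ with $0\leq T'\leq I_{H_L}$ satisfying \eqref{eq:t4-1}. Then $\langle L_s, T'L_t\rangle_{H_L}=K(s,t)=\langle L_s, TL_t\rangle_{H_L}$ for all $s,t\in S$. Since $\overline{\mathrm{span}}\{L_s:s\in S\}=H_L$, this forces $T'=T$, and hence $(T')^{1/2}=T^{1/2}$ by uniqueness of the positive square root.

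The only delicate point is keeping track of which inner product is in play: the identity $J^{*}L_t=K_t$ is an equality in $H_K$, while the vector $TL_t=JK_t$ and the final reproducing step take place in $H_L$. Once this is handled carefully, the argument is essentially a one-line computation built on \prettyref{thm:21}.
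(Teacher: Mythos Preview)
Your proof is correct and follows essentially the same approach as the paper: both set $T=JJ^{*}$ (the paper writes $T=jj^{*}$) where $J:H_K\hookrightarrow H_L$ is the contractive inclusion from \prettyref{thm:21}. The paper's argument is more terse---it phrases the construction via the bounded sesquilinear form $\Phi(L_s,L_t)=K(s,t)$ and then appeals to the general theory of quadratic forms---whereas you supply the explicit verification that $J^{*}L_t=K_t$ and carry the computation through to \eqref{eq:t4-1}, but the underlying idea is identical.
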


\begin{proof}
Define a map $\Phi$ as 
\[
\Phi\left(L_{s},L_{t}\right)=K\left(s,t\right)
\]
and extend it by linearity: 
\[
\Phi\left(\sum_{i=1}^{m}c_{i}L_{s_{i}},\sum_{j=1}^{n}d_{j}L_{t_{j}}\right)=\sum_{i=1}^{m}\sum_{j=1}^{n}\overline{c_{i}}d_{j}K\left(s_{i},t_{j}\right).
\]
Then $\Phi$ extends by density to a unique bounded sesquilinear form
on $H_{L}$. 

The remaining assertions follow from the general theory of quadratic
forms. In particular, if $j:H_{K}\rightarrow H_{L}$ is the inclusion
map, then $T=jj^{*}:H_{L}\rightarrow H_{L}$ is the desired operator. 
\end{proof}
We now consider $\mathcal{L}\left(H\right)$-valued p.d. kernels. 
\begin{defn}
\label{def:3-3}Suppose $K,L$ are $\mathcal{L}\left(H\right)$-valued
p.d. kernels defined on $S\times S$. We say that $K\leq L$ if 
\begin{equation}
\sum\left\langle a_{i},K\left(s_{i},s_{j}\right)a_{j}\right\rangle _{H}\leq\sum\left\langle a_{i},L\left(s_{i},s_{j}\right)a_{j}\right\rangle _{H}\label{eq:c4}
\end{equation}
for all $\left(s_{i}\right)_{i=1}^{n}$ in $S$, $\left(a_{i}\right)_{i=1}^{n}$
in $H$, and $n\in\mathbb{N}$.
\end{defn}

Let $\tilde{K}$, $\tilde{L}$ be the associated scalar-valued p.d.
kernels on $S\times S$, and $H_{\tilde{K}},H_{\tilde{L}}$ be the
respective RKHSs (see \eqref{eq:b4}). \prettyref{def:3-3} means
that
\begin{equation}
K\leq L\Longleftrightarrow\tilde{K}\leq\tilde{L}.\label{eq:c5}
\end{equation}

\begin{thm}
\label{thm:c4}Let $K,L:S\times S\rightarrow\mathcal{L}\left(H\right)$
be operator-valued p.d. kernels on $S\times S$. Let $\tilde{K}$,
$\tilde{L}$ be the associated scalar-valued p.d. kernels, and $H_{\tilde{K}}$,
$H_{\tilde{L}}$ be the respective RKHSs. 

The following are equivalent: 
\begin{enumerate}
\item $K\leq L$ in the sense of \eqref{eq:c4}--\eqref{eq:c5}.
\item There exists a positive selfadjoint operator $T:H_{\tilde{L}}\rightarrow H_{\tilde{L}}$,
$0\leq T\leq I_{H_{\tilde{L}}}$, such that 
\begin{equation}
K\left(s,t\right)=V_{L}\left(s\right)^{*}TV_{L}\left(t\right).\label{eq:C4}
\end{equation}
Equivalently, 
\begin{eqnarray}
V_{K}\left(t\right) & = & T^{1/2}V_{L}\left(t\right).\label{eq:C5}
\end{eqnarray}
Here, $L\left(s,t\right)=V_{L}\left(s\right)^{*}V_{L}\left(t\right)$
and $K\left(s,t\right)=V_{K}\left(s\right)^{*}V_{K}\left(t\right)$
are the respective canonical factorizations of $L$ and $K$ (\prettyref{thm:b2}). 
\end{enumerate}
\end{thm}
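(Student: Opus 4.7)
The plan is to reduce \prettyref{thm:c4} to its scalar-valued counterpart \prettyref{prop:f1}, using the equivalence $K\leq L \iff \tilde{K}\leq\tilde{L}$ recorded in \eqref{eq:c5}, and then translate the resulting operator back through the canonical factorization of \prettyref{thm:b2}. The direction $(2)\Rightarrow(1)$ is essentially free, so I will focus attention on $(1)\Rightarrow(2)$.

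For $(1)\Rightarrow(2)$, I would exploit that by \eqref{eq:b5} the generating vectors in $H_{\tilde{L}}$ are $\tilde{L}_{(s,a)} = V_L(s)a$, while $\tilde{K}((s,a),(t,b)) = \langle a, K(s,t)b\rangle_H$. Define a sesquilinear form $\Phi$ on the dense subspace $\mathrm{span}\{V_L(s)a : s\in S,\, a\in H\} \subset H_{\tilde{L}}$ by
\begin{equation}
\Phi\!\left(\sum_i c_i V_L(s_i)a_i,\; \sum_j d_j V_L(t_j)b_j\right) := \sum_{i,j}\overline{c_i}d_j \langle a_i, K(s_i,t_j)b_j\rangle_H.
\end{equation}
The assumption $K\leq L$ is precisely the statement that $\Phi$ is positive and dominated by $\|\cdot\|_{H_{\tilde{L}}}^2$, so $\Phi$ extends by continuity to a bounded positive sesquilinear form on $H_{\tilde{L}}$, well-definedness on equivalence classes following from the same domination. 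By the standard Riesz/Lax--Milgram correspondence there is a unique positive selfadjoint $T$ on $H_{\tilde{L}}$ with $0\leq T\leq I_{H_{\tilde{L}}}$ representing $\Phi$. Unpacking on generators gives
\begin{equation}
\langle a, K(s,t)b\rangle_H = \langle V_L(s)a,\, T\, V_L(t)b\rangle_{H_{\tilde{L}}} = \langle a,\, V_L(s)^* T\, V_L(t) b\rangle_H,
\end{equation}
which is \eqref{eq:C4}. For the reverse direction $(2)\Rightarrow(1)$, given any $(s_i,a_i)_{i=1}^n$, set $\xi := \sum_i V_L(s_i)a_i \in H_{\tilde{L}}$; then $\sum_{i,j} \langle a_i, K(s_i,s_j)a_j\rangle_H = \langle \xi, T\xi\rangle \leq \langle \xi,\xi\rangle = \sum_{i,j}\langle a_i, L(s_i,s_j)a_j\rangle_H$, since $0\leq T\leq I$.

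To upgrade \eqref{eq:C4} to the operator factorization \eqref{eq:C5}, consider $\tilde{V}(s) := T^{1/2} V_L(s) : H \to H_{\tilde{L}}$. From \eqref{eq:C4} one reads $\tilde{V}(s)^*\tilde{V}(t) = K(s,t)$. Let $L_0 := \overline{\mathrm{span}}\{\tilde{V}(s)a : s\in S,\, a\in H\} \subset H_{\tilde{L}}$. The converse part of \prettyref{thm:b2} then furnishes a unique unitary isomorphism $U : L_0 \to H_{\tilde{K}}$ with $U\tilde{V}(s) = V_K(s)$ for every $s$, and this is the precise sense of the identity $V_K(t) = T^{1/2} V_L(t)$.

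The step I expect to be the main subtlety is not the existence of $T$, but the identification in \eqref{eq:C5}: a priori $V_K$ lands in $H_{\tilde{K}}$ whereas $T^{1/2} V_L$ lands in $H_{\tilde{L}}$, so \eqref{eq:C5} is really an equation modulo the canonical isomorphism $L_0 \simeq H_{\tilde{K}}$ of \prettyref{thm:b2}. Keeping these identifications straight --- and verifying that the sesquilinear form $\Phi$ is genuinely well-defined on equivalence classes in $H_{\tilde{L}}$ (which again reduces to the domination $\tilde{K}\leq\tilde{L}$) --- is the only point requiring care; the rest is bookkeeping on generators followed by extension by continuity.
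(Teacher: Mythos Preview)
Your proof is correct and follows essentially the same approach as the paper: both construct $T$ via the bounded positive sesquilinear form on $H_{\tilde{L}}$ determined by $\tilde{K}$ (the paper packages this step as \prettyref{prop:f1}, which you have simply inlined), and both handle $(2)\Rightarrow(1)$ by the identical contraction estimate $\langle\xi,T\xi\rangle\leq\langle\xi,\xi\rangle$ with $\xi=\sum_i V_L(s_i)a_i$. Your treatment of \eqref{eq:C5} via the uniqueness clause of \prettyref{thm:b2} is in fact more explicit than the paper's, which merely says it ``follows from the above argument and the proof of \prettyref{prop:f1}.''
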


\begin{proof}
(1) $\Rightarrow$ (2) Assume $K\le L$. By \prettyref{thm:21}, $H_{\tilde{K}}\subset H_{\tilde{L}}$
and $\left\Vert f\right\Vert _{H_{\tilde{L}}}\leq\left\Vert f\right\Vert _{H_{\tilde{K}}}$
for all $f\in H_{\tilde{K}}$. 

Recall that $L$ factors as 
\[
L\left(s,t\right)=V_{L}\left(s\right)^{*}V_{L}\left(t\right)
\]
where $V_{L}\left(t\right):H\rightarrow H_{\tilde{L}}$ is given by
$V_{L}\left(t\right)a=\tilde{L}_{\left(t,a\right)}$, for all $t\in S$
and $a\in H$ (see \eqref{eq:b5}).

Apply \prettyref{prop:f1} to the scalar-valued kernels $\tilde{K}$
and $\tilde{L}$: There exists a unique selfadjoint operator $T$
in $H_{\tilde{L}}$, with $0\leq T\leq I_{H_{\tilde{L}}}$, so that
\begin{align*}
\left\langle a,K\left(s,t\right)b\right\rangle _{H} & =\left\langle \tilde{K}_{\left(s,a\right)},\tilde{K}_{\left(t,b\right)}\right\rangle _{H_{\tilde{K}}}\\
 & =\left\langle T^{1/2}\tilde{L}_{\left(s,a\right)},T^{1/2}\tilde{L}_{\left(t,b\right)}\right\rangle _{H_{\tilde{L}}}\\
 & =\left\langle T^{1/2}V_{L}\left(s\right)a,T^{1/2}V_{L}\left(t\right)b\right\rangle _{H_{\tilde{L}}}\\
 & =\left\langle a,V_{L}\left(s\right)^{*}TV_{L}\left(t\right)b\right\rangle _{H}
\end{align*}
for all $a,b\in H$, and all $s,t\in S$. Therefore, \eqref{eq:C4}
holds. The identity \eqref{eq:C5} follows from the above argument
and the proof of \prettyref{prop:f1}. 

(2) $\Rightarrow$ (1) Conversely, from \eqref{eq:C4} and the fact
that $0\leq T\leq I_{H_{\tilde{L}}}$, we have 
\begin{align*}
\sum_{i,j}\left\langle a_{i},K\left(s_{i},s_{j}\right)a_{j}\right\rangle _{H} & =\sum_{i,j}\left\langle a_{i},V_{L}\left(s_{i}\right)^{*}TV_{L}\left(s_{j}\right)a_{j}\right\rangle _{H}\\
 & =\left\Vert T^{1/2}\sum_{i}V_{L}\left(s_{i}\right)a_{i}\right\Vert _{H_{\tilde{L}}}^{2}\leq\left\Vert \sum_{i}V_{L}\left(s_{i}\right)a_{i}\right\Vert _{H_{\tilde{L}}}^{2}\\
 & =\sum_{i,j}\left\langle a_{i},V_{L}\left(s_{i}\right)^{*}V_{L}\left(s_{j}\right)a_{j}\right\rangle _{H}\\
 & =\sum_{i,j}\left\langle a_{i},L\left(s_{i},s_{j}\right)a_{j}\right\rangle _{H}
\end{align*}
and so $K\leq L$. 
\end{proof}
\begin{cor}
Suppose $K,L:S\times S\rightarrow\mathcal{L}\left(H\right)$ p.d.,
and $K\leq L$. Let $T=dK/dL$ be the Radon-Nikodym derivative from
\prettyref{thm:c4}. 

Let $W_{L}\left(t\right)$ be the $H$-valued Gaussian process from
\eqref{eq:d8}, i.e., 
\begin{equation}
W_{L}\left(t\right)=\sum_{i}\left(V_{L}\left(t\right)^{*}\varphi_{i}\right)Z_{i}
\end{equation}
Set 
\begin{equation}
W_{K}\left(t\right):=\sum_{i}\left(V_{L}\left(t\right)^{*}T^{1/2}\varphi_{i}\right)Z_{i}.
\end{equation}
Then $K$ admits the following decomposition 
\begin{equation}
K\left(s,t\right)=\int_{\Omega}\left|W_{K}\left(s\right)\left\rangle \right\langle W_{K}\left(t\right)\right|d\mathbb{P}.\label{eq:C8}
\end{equation}
\end{cor}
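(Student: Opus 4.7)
The plan is to reduce the assertion to the second-moment identity
\[
\mathbb{E}\bigl[\langle a,W_{K}(s)\rangle_{H}\langle W_{K}(t),b\rangle_{H}\bigr]=\langle a,K(s,t)b\rangle_{H},\qquad a,b\in H,\ s,t\in S,
\]
which, by the remark following equation \eqref{eq:b6}, is exactly what the integral representation \eqref{eq:C8} means. Once this identity is in hand, well-posedness of the series defining $W_{K}(t)$ follows from the same CLT argument that legitimized the definition of $W_{L}(t)$ in \eqref{eq:d8}, applied to the vectors $V_{L}(t)^{*}T^{1/2}\varphi_{i}\in H$ in place of $V_{L}(t)^{*}\varphi_{i}$.

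First I would mimic the computation in the proof of \prettyref{thm:c3}, expanding the pairing and pulling the expectation inside the double sum. Using $\mathbb{E}[Z_{i}Z_{j}]=\delta_{i,j}$ this collapses to
\[
\sum_{i}\langle a,V_{L}(s)^{*}T^{1/2}\varphi_{i}\rangle_{H}\,\langle V_{L}(t)^{*}T^{1/2}\varphi_{j},b\rangle_{H}\Big|_{j=i},
\]
which I would rewrite, using that $T^{1/2}$ is self-adjoint on $H_{\tilde L}$, as
\[
\sum_{i}\langle T^{1/2}V_{L}(s)a,\varphi_{i}\rangle_{H_{\tilde{L}}}\,\langle \varphi_{i},T^{1/2}V_{L}(t)b\rangle_{H_{\tilde{L}}}.
\]

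Next, since $\{\varphi_{i}\}$ is an ONB of $H_{\tilde{L}}$, Parseval collapses the sum to the single inner product $\langle T^{1/2}V_{L}(s)a,\,T^{1/2}V_{L}(t)b\rangle_{H_{\tilde L}}=\langle a,V_{L}(s)^{*}TV_{L}(t)b\rangle_{H}$. By the identification $T=dK/dL$ from \prettyref{thm:c4}, in particular the factorization \eqref{eq:C4}, this last expression equals $\langle a,K(s,t)b\rangle_{H}$, which is what was needed. Equivalently, one can invoke \eqref{eq:C5} directly: the sum defining $W_{K}(t)$ is nothing but $\sum_{i}(V_{K}(t)^{*}\varphi_{i})Z_{i}$ (after reindexing via the isometry in \eqref{eq:C5}), so the conclusion is an immediate instance of \prettyref{thm:c3} applied with $K$ in place of the ambient kernel and the same ONB.

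The only mildly subtle point is the convergence of the $H$-valued series $\sum_{i}(V_{L}(t)^{*}T^{1/2}\varphi_{i})Z_{i}$, since the operators $V_{L}(t)^{*}T^{1/2}$ need not be Hilbert--Schmidt; but this is the same issue as in \eqref{eq:d8}, and is resolved in the same fashion (CLT together with the interpretation of $W_{K}(t)$ as a cylindrical Gaussian whose finite-dimensional projections are consistent). All the harder work---producing $T$ and establishing $V_{K}=T^{1/2}V_{L}$---has already been done in \prettyref{thm:c4}, so the proof here is essentially a one-line computation modeled on \prettyref{thm:c3}.
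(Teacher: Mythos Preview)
Your proposal is correct and follows essentially the same approach as the paper: both reduce \eqref{eq:C8} to the second-moment identity, expand using $\mathbb{E}[Z_iZ_j]=\delta_{i,j}$, collapse the resulting diagonal sum via the ONB resolution of the identity in $H_{\tilde L}$, and then invoke the factorization $K(s,t)=V_L(s)^*TV_L(t)$ from \prettyref{thm:c4}. The paper phrases the Parseval step in Dirac notation, first writing $K(s,t)=\sum_i |V_L(s)^*T^{1/2}\varphi_i\rangle\langle V_L(t)^*T^{1/2}\varphi_i|$ and then matching, but this is only a cosmetic difference.
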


\begin{proof}
Recall that \eqref{eq:C8} is equivalent to 
\begin{equation}
\mathbb{E}\left[\left\langle a,W_{K}\left(s\right)\right\rangle _{H}\left\langle W_{K}\left(t\right),b\right\rangle _{H}\right]=\left\langle a,K\left(s,t\right)b\right\rangle _{H},\label{eq:c8}
\end{equation}
for all $a,b\in H$ and $s,t\in S$.

Given $L:S\times S\rightarrow\mathcal{L}\left(H\right)$ p.d., recall
that 
\[
L\left(s,t\right)=V_{L}\left(s\right)^{*}V_{L}\left(t\right)
\]
where $V_{L}\left(t\right):H\rightarrow H_{\tilde{L}}=$ the RKHS
of $\tilde{L}$. 

Let $\left(\varphi_{i}\right)$ be an ONB in $H_{\tilde{L}}$, and
apply the identity 
\[
I_{H_{\tilde{L}}}=\sum_{i}\left|\varphi_{i}\left\rangle \right\langle \varphi_{i}\right|
\]
we then get 
\begin{align*}
L\left(s,t\right) & =V_{L}\left(s\right)^{*}I_{H_{\tilde{L}}}V_{L}\left(t\right)\\
 & =\sum_{i}\left|V_{L}\left(s\right)^{*}\varphi_{i}\left\rangle \right\langle V_{L}\left(t\right)^{*}\varphi_{i}\right|.
\end{align*}
Similarly, using $V_{K}\left(t\right)=T^{1/2}V_{L}\left(t\right)$
from \eqref{eq:C5}, 
\begin{equation}
K\left(s,t\right)=\sum_{i}\left|V_{L}\left(s\right)^{*}T^{1/2}\varphi_{i}\left\rangle \right\langle V_{L}\left(t\right)^{*}T^{1/2}\varphi_{i}\right|.\label{eq:c9}
\end{equation}
Thus, 
\begin{eqnarray*}
\text{l.h.s.}_{\left(\ref{eq:c8}\right)} & = & \sum_{i,j}\left\langle a,V_{L}\left(s\right)^{*}T^{1/2}\varphi_{i}\right\rangle _{H}\left\langle V_{L}\left(t\right)^{*}T^{1/2}\varphi_{j},b\right\rangle _{H}\underset{=\delta_{i,j}}{\underbrace{\mathbb{E}\left[Z_{i}Z_{j}\right]}}\\
 & = & \sum_{i}\left\langle a,V_{L}\left(s\right)^{*}T^{1/2}\varphi_{i}\right\rangle _{H}\left\langle V_{L}\left(t\right)^{*}T^{1/2}\varphi_{i},b\right\rangle _{H}\\
 & = & \left\langle a,\left(\sum_{i}\left|V_{L}\left(s\right)^{*}T^{1/2}\varphi_{i}\left\rangle \right\langle V_{L}\left(t\right)^{*}T^{1/2}\varphi_{i}\right|\right)b\right\rangle _{H}\\
 & \underset{\left(\ref{eq:c9}\right)}{=} & \left\langle a,K\left(s,t\right)b\right\rangle _{H}.
\end{eqnarray*}
\end{proof}

\section{\protect\label{sec:4}Completely positive maps, quantum channels}

The role of completely positive maps in physics can be described as
follows: they represent the type of transformation that occurs, for
example, when a beam in a certain mixed state passes through a device,
resulting in another beam in a different mixed state, thus accounting
for dissipative effects. These transformations must map states into
states, meaning they must be positive. However, complete positivity
is a stronger requirement. Under the standard assumption of unitary
dynamics for the overall system, completely positive maps emerge as
restrictions of representations in a larger system, such as the combined
system of the beam and the device. In mathematical terms, this is
captured by Stinespring's theorem (see \eqref{eq:d1}), which states
that any completely positive map can be realized as a compression
of unitary dynamics, and thus, is experimentally realizable.

Let $\mathfrak{A}$ and $\mathfrak{B}$ be unital $C^{*}$-algebras.
A map $\psi:\mathfrak{A}\rightarrow\mathfrak{B}$ is said to be completely
positive (CP) if $\psi\otimes I_{n}:\mathfrak{A}\otimes M_{n}\rightarrow\mathfrak{B}\otimes M_{n}$
is positive for all $n\in\mathbb{N}$. 

Let $\mathfrak{A}$ be a unital $C^{*}$-algebra. Suppose $\psi:\mathfrak{A}\rightarrow\mathcal{L}\left(H\right)$
is completely positive and $\psi\left(I\right)=I_{H}$. Stinespring's
dilation theorem states that, there exists a Hilbert space $\mathscr{K}$,
a representation $\pi:\mathfrak{A}\rightarrow\mathcal{L}\left(\mathscr{K}\right)$,
and an isometric embedding $V:H\rightarrow\mathscr{K}$ such that
\begin{equation}
\psi\left(A\right)=V^{*}\pi\left(A\right)V.\label{eq:d1}
\end{equation}
Further, $\left(\pi,V,\mathscr{K}\right)$ may be chosen to be minimal,
i.e., $\mathscr{K}=\overline{\pi\left(\mathfrak{A}\right)VH}$. In
that case, the dilation is unique up to unitary equivalence. 

We sketch a proof of \eqref{eq:d1} as an application of results in
Sections \ref{sec:2}--\ref{sec:3}. 
\begin{cor}
\label{cor:d1}Let $\psi:\mathfrak{A}\rightarrow\mathcal{L}\left(H\right)$
be as above, i.e., completely positive, and $\psi\left(I\right)=I$.
Let $K=K_{\varphi}:\mathfrak{A}\times\mathfrak{A}\rightarrow\mathcal{L}\left(H\right)$
be given by 
\[
K\left(A,B\right):=\psi\left(A^{*}B\right),\quad A,B\in\mathfrak{A},
\]
as a $\mathcal{L}\left(H\right)$-valued p.d. kernel on $\mathfrak{A}\times\mathfrak{A}$.
Let $\mathscr{K}=H_{\tilde{K}}=$ the RKHS of the associated scalar-valued
kernel $\tilde{K}$. Define $V=V\left(I\right):H\rightarrow\mathscr{K}$
by 
\[
Vh=\tilde{K}_{\left(I,h\right)}:\mathfrak{A}\times H\rightarrow\mathbb{C};
\]
set $\pi:\mathfrak{A}\rightarrow\mathcal{L}\left(\mathscr{K}\right)$
by 
\[
\pi\left(A\right)\tilde{K}_{\left(B,h\right)}=\tilde{K}_{\left(AB,h\right)}
\]
for all $\left(B,h\right)\in\mathfrak{A}\times H$. Then \eqref{eq:d1}
holds, and $\left(\pi,V,\mathscr{K}\right)$ is minimal. 
\end{cor}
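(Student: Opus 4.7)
The plan is to identify the Stinespring data directly inside the RKHS produced by \prettyref{thm:b2} applied to the $\mathcal{L}(H)$-valued kernel $K(A,B) := \psi(A^*B)$. First I would check that $K$ is p.d.: for any $A_1,\dots,A_n \in \mathfrak{A}$ and $a_1,\dots,a_n \in H$, the sum $\sum_{i,j}\langle a_i, \psi(A_i^* A_j) a_j\rangle_H$ is the pairing of $(a_i)_i \in H^n$ with $(\psi \otimes I_n)(\beta^*\beta) \geq 0$, where $\beta := (A_1,\dots,A_n) \in M_{1,n}(\mathfrak{A})$, so positivity is immediate from complete positivity of $\psi$. \prettyref{thm:b2} then furnishes $\mathscr{K} := H_{\tilde K}$ together with $V(A)a = \tilde K_{(A,a)}$ satisfying $K(A,B) = V(A)^* V(B)$. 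Setting $V := V(I)$, the normalization $\psi(I) = I_H$ gives $V^* V = K(I,I) = I_H$, so $V$ is an isometry.

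The key step, and what I expect to be the main obstacle, is extending $\pi(A)\tilde K_{(B,h)} := \tilde K_{(AB,h)}$ from the dense span $\mathrm{span}\{\tilde K_{(B,h)} : B \in \mathfrak{A},\, h \in H\}$ to a bounded operator on $\mathscr{K}$. A finite sum has squared norm
\[
\Bigl\|\sum_i \tilde K_{(AB_i,h_i)}\Bigr\|_\mathscr{K}^2 = \sum_{i,j}\langle h_i,\psi(B_i^* A^* A B_j) h_j\rangle_H,
\]
and I need to bound this by $\|A\|^2 \sum_{i,j}\langle h_i,\psi(B_i^* B_j) h_j\rangle_H$. To do so I would use $A^*A \leq \|A\|^2 I$ in $\mathfrak{A}$, write the positive element $\|A\|^2 I - A^* A = C^*C$, and observe that $\|A\|^2 \beta^* \beta - (A\beta)^*(A\beta) = (C\beta)^*(C\beta)$ is a positive element of $M_n(\mathfrak{A})$ for $\beta := (B_1,\dots,B_n)$. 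Applying the positive map $\psi \otimes I_n$ and pairing with $(h_1,\dots,h_n) \in H^n$ delivers the required inequality, whence $\|\pi(A)\| \leq \|A\|$ and $\pi(A)$ extends by density.

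The remaining checks are routine from the reproducing formula $\langle \tilde K_{(B,h)}, \tilde K_{(C,k)}\rangle_\mathscr{K} = \langle h, \psi(B^* C) k\rangle_H$: unitality of $\pi$ is immediate, while multiplicativity $\pi(AB) = \pi(A)\pi(B)$ and the $*$-property $\pi(A)^* = \pi(A^*)$ follow by testing against generators $\tilde K_{(B,h)}$. The Stinespring identity then reduces to the one-line computation
\[
\langle h, V^* \pi(A) V k\rangle_H = \langle \tilde K_{(I,h)}, \tilde K_{(A,k)}\rangle_\mathscr{K} = \langle h, \psi(A) k\rangle_H,
\]
and minimality is built into \prettyref{thm:b2}, since $\mathscr{K} = \overline{\mathrm{span}}\{\tilde K_{(A,h)}\} = \overline{\mathrm{span}}\{\pi(A) V h\} = \overline{\pi(\mathfrak{A}) V H}$.
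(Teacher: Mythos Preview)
Your proposal is correct and follows essentially the same route as the paper's proof: both identify the Stinespring triple $(\pi,V,\mathscr{K})$ inside the RKHS $H_{\tilde K}$ produced by \prettyref{thm:b2}, set $V=V(I)$, and read off \eqref{eq:d1} and minimality from the factorization $K(A,B)=V(A)^*V(B)$ together with $\pi(A)Vh=\tilde K_{(A,h)}$. Your version is in fact more complete, since you supply the boundedness argument for $\pi(A)$ via $A^*A\le\|A\|^2 I$ and the verification that $\pi$ is a $*$-representation, details the paper leaves implicit in its ``direct translation'' sketch.
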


\begin{proof}
This is a direct ``translation'' of \prettyref{thm:b2} to the setting
of CP maps. 

Specifically, when $K\left(A,B\right)=\psi\left(A^{*}B\right)$, it
factors into 
\[
K\left(A,B\right)=V^{*}\left(A\right)V\left(B\right)
\]
as in \eqref{eq:b2}. That is, 
\[
K\left(A,B\right)h=V^{*}\left(A\right)V\left(B\right)h=V^{*}\left(A\right)\tilde{K}_{\left(B,h\right)}=\psi\left(A^{*}B\right)h.
\]
By setting $A=I$ and $V:=V\left(I\right)$, this reduces to 
\begin{align*}
K\left(I,B\right)h & =V^{*}V\left(B\right)h=V^{*}\tilde{K}_{\left(B,h\right)}\\
 & =V^{*}\pi\left(B\right)\tilde{K}_{\left(I,h\right)}\\
 & =V^{*}\pi\left(B\right)Vh=\psi\left(B\right)h,
\end{align*}
which is \eqref{eq:d1}. 

Moreover, since 
\begin{align*}
\mathscr{K}=H_{\tilde{K}} & =\overline{span}\left\{ \tilde{K}_{\left(A,h\right)}:A\in\mathfrak{A},h\in H\right\} \\
 & =\overline{span}\left\{ \pi\left(A\right)Vh:A\in\mathfrak{A},h\in H\right\} =\overline{\pi\left(\mathfrak{A}\right)VH},
\end{align*}
the dilation is minimal. The assertion of uniqueness (up to unitary
equivalence) is immediate. 
\end{proof}
In the above setting, there is also a Radon--Nikodym type theorem
that characterizes all CP maps $\varphi,\psi$ for which $\varphi\leq\psi$,
in the sense that $\psi-\varphi$ is CP. In terms of the operator-valued
kernels, say $K_{\varphi}$ and $K_{\psi}$, we have $\varphi\leq\psi$
$\Longleftrightarrow$ $K_{\varphi}\leq K_{\psi}$, and the latter
is in the sense of \prettyref{def:3-3}.

We sketch below that this result can be derived as an application
of \prettyref{thm:c4}. 
\begin{cor}
\label{cor:d2}Let $\varphi,\psi:\mathfrak{A}\rightarrow\mathcal{L}\left(H\right)$
be CP maps. Let $\left(\pi_{\psi},V_{\psi},\mathscr{K}_{\psi}\right)$
be the minimal Stinespring dilation from \prettyref{cor:d1}, i.e.,
$\mathscr{K}_{\psi}=H_{\tilde{K}_{\psi}}=$ the RKHS of $\tilde{K}_{\psi}$,
and 
\[
\tilde{K}\left(\left(A,a\right),\left(B,b\right)\right)=\left\langle a,\psi\left(A^{*}B\right)b\right\rangle _{H}
\]
for all $A,B\in\mathfrak{A}$, and $a,b\in H$.

Then $\varphi\leq\psi$ if and only if there exists a unique positive
selfadjoint operator $T$ in the commutant $\pi_{\psi}\left(\mathfrak{A}\right)'$,
$0\leq T\leq I_{\mathscr{K}_{\psi}}$, such that 
\begin{equation}
\varphi\left(A\right)=V_{\psi}^{*}T^{1/2}\pi_{\psi}\left(A\right)T^{1/2}V_{\psi},\quad A\in\mathfrak{A}.\label{eq:D2}
\end{equation}
\end{cor}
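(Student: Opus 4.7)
The plan is to specialize \prettyref{thm:c4} to the $\mathcal{L}(H)$-valued p.d. kernels $K_\varphi(A,B):=\varphi(A^*B)$ and $K_\psi(A,B):=\psi(A^*B)$ on $\mathfrak{A}\times\mathfrak{A}$. Since, as noted in the excerpt, $\varphi\leq\psi$ is equivalent to $K_\varphi\leq K_\psi$, \prettyref{thm:c4} furnishes a unique selfadjoint $T$ on $\mathscr{K}_\psi=H_{\tilde{K}_\psi}$ with $0\leq T\leq I_{\mathscr{K}_\psi}$ and $K_\varphi(A,B)=V_\psi(A)^*TV_\psi(B)$. The identity $V_\psi(A)=\pi_\psi(A)V_\psi$ is immediate from the defining formula $\pi_\psi(A)\tilde{K}_{\psi,(I,h)}=\tilde{K}_{\psi,(A,h)}$ of \prettyref{cor:d1}, so specializing to $A=I$ already yields $\varphi(B)=V_\psi^*T\pi_\psi(B)V_\psi$.

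The decisive step is to show $T\in\pi_\psi(\mathfrak{A})'$, since then $T^{1/2}$ also lies in the commutant by functional calculus and $V_\psi^*T\pi_\psi(B)V_\psi=V_\psi^*T^{1/2}\pi_\psi(B)T^{1/2}V_\psi$ is exactly \eqref{eq:D2}. The strategy exploits two competing expansions of $K_\varphi(A,B)=\varphi(A^*B)$: the \emph{direct} one $V_\psi^*\pi_\psi(A^*)T\pi_\psi(B)V_\psi$ (via $V_\psi(A)=\pi_\psi(A)V_\psi$) and the \emph{shifted} one $V_\psi^*T\pi_\psi(A^*B)V_\psi=V_\psi^*T\pi_\psi(A^*)\pi_\psi(B)V_\psi$ (via $K_\varphi(A,B)=K_\varphi(I,A^*B)$ together with the multiplicativity of $\pi_\psi$). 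Padding the left with an extra factor $\pi_\psi(D)$, both expansions of the pairing $\langle\pi_\psi(D)V_\psi h',[\pi_\psi(A),T]\pi_\psi(B)V_\psi h\rangle$ collapse to $\langle h',\varphi(D^*AB)h\rangle_H$ and therefore cancel. By the minimality $\mathscr{K}_\psi=\overline{\pi_\psi(\mathfrak{A})V_\psi H}$ recorded in \prettyref{cor:d1}, this forces $[\pi_\psi(A),T]=0$ on all of $\mathscr{K}_\psi$.

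Uniqueness of $T$ is inherited from \prettyref{thm:c4} (and ultimately from \prettyref{prop:f1}). For the converse, substituting \eqref{eq:D2} back and commuting $T^{1/2}$ through $\pi_\psi$ gives $\varphi(A^*B)=V_\psi^*\pi_\psi(A^*)TV_\psi(B)=V_\psi(A)^*TV_\psi(B)$, so the reverse direction of \prettyref{thm:c4} yields $K_\varphi\leq K_\psi$, i.e., $\varphi\leq\psi$.

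The main obstacle is the commutant property $T\in\pi_\psi(\mathfrak{A})'$: the positivity, uniqueness, and norm bound on $T$ come for free from \prettyref{thm:c4}, and the factorization identities are mechanical translations through $V_\psi(A)=\pi_\psi(A)V_\psi$. What requires genuine work is unlocking the $C^*$-algebra multiplication of $\mathfrak{A}$ to produce two expansions whose difference is precisely a matrix coefficient of $[\pi_\psi(A),T]$; this is the one place where the $*$-algebra structure of $\mathfrak{A}$ (as opposed to its being a mere index set) plays an essential role.
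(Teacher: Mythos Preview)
Your proof is correct and follows the same route as the paper: specialize \prettyref{thm:c4} to $K_\varphi\leq K_\psi$, use $V_\psi(A)=\pi_\psi(A)V_\psi$ to rewrite the factorization, and verify $T\in\pi_\psi(\mathfrak{A})'$ by comparing two expansions of $\varphi$ coming from the associativity $A^*B=(A^*B)\cdot I=I\cdot(A^*B)$. The only difference is in the commutant step: the paper records just $V_\psi^*T\pi_\psi(B)V_\psi=V_\psi^*\pi_\psi(B)TV_\psi$ (from $\varphi(B)=K_\varphi(I,B)=K_\varphi(B^*,I)$) and asserts commutation directly, whereas your padding by $\pi_\psi(D)$ on the left and $\pi_\psi(B)$ on the right, together with minimality $\mathscr{K}_\psi=\overline{\pi_\psi(\mathfrak{A})V_\psi H}$, makes the passage to $[\pi_\psi(A),T]=0$ on all of $\mathscr{K}_\psi$ fully explicit---which is a genuine improvement, since $V_\psi H$ alone need not be dense.
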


\begin{proof}
Suppose $\varphi\leq\psi$ (i.e., $K_{\varphi}\leq K_{\psi}$). 

Set $K_{\psi}\left(A,B\right)=\psi\left(A^{*}B\right)$ as before,
so it factors into 
\[
K_{\psi}\left(A,B\right)=V_{\psi}\left(A\right)^{*}V_{\psi}\left(B\right).
\]
By \prettyref{thm:c4}, there exists a unique $T$, $0\leq T\leq I_{H_{\tilde{K}_{\psi}}}$,
such that (see \eqref{eq:C4}) 
\begin{equation}
K_{\varphi}\left(A,B\right)=V_{\psi}\left(A\right)^{*}TV_{\psi}\left(B\right).\label{eq:d2}
\end{equation}

Set $A=I$ in \eqref{eq:d2}, then 
\begin{equation}
\varphi\left(B\right)=K_{\varphi}\left(I,B\right)=V_{\psi}\left(I\right)^{*}TV_{\psi}\left(B\right)=V_{\psi}T\pi_{\psi}\left(B\right)V_{\psi},\label{eq:d4}
\end{equation}
where $V_{\psi}:=V_{\psi}\left(I\right)$, and $\pi_{\psi}:\mathfrak{A}\rightarrow\mathcal{L}\left(\mathscr{K}_{\psi}\right)$
is as in \prettyref{cor:d1}. In particular, 
\begin{equation}
V_{\psi}\left(A\right)=\pi_{\psi}\left(A\right)V_{\psi},\quad A\in\mathfrak{A}.\label{eq:d5}
\end{equation}

It remains to show that $T\in\pi_{\psi}\left(\mathfrak{A}\right)'$.
For this, one checks that 
\begin{eqnarray}
\varphi\left(B\right) & = & \varphi\left(\left(B^{*}\right)^{*}I\right)\nonumber \\
 & = & K_{\varphi}\left(B^{*},I\right)\nonumber \\
 & \underset{\left(\ref{eq:d2}\right)}{=} & V_{\psi}\left(B^{*}\right)^{*}TV_{\psi}\left(I\right)\nonumber \\
 & \underset{\left(\ref{eq:d5}\right)}{=} & \left(\pi_{\psi}\left(B^{*}\right)V_{\psi}\right)^{*}TV_{\psi}\nonumber \\
 & = & V_{\psi}^{*}\pi_{\psi}\left(B\right)TV_{\psi}.\label{eq:d6}
\end{eqnarray}
Combining \eqref{eq:d4} and \eqref{eq:d6}, we conclude that $\pi_{\psi}\left(B\right)T=T\pi_{\psi}\left(B\right)$,
for all $B\in\mathfrak{A}$. Thus, $T\in\pi_{\psi}\left(\mathfrak{A}\right)'$
and \eqref{eq:D2} follows. 

Conversely, given \eqref{eq:D2}, it is clear that $\varphi$ is CP.
See e.g., the proof of ``(2)$\Rightarrow$(1)'' in \prettyref{thm:c4}. 
\end{proof}
In view of the correspondence between operator-valued p.d. kernels
and Hilbert space-valued Gaussian processes, we have:
\begin{cor}
Every CP map $\psi:\mathfrak{A}\rightarrow\mathcal{L}\left(H\right)$
admits a direct integral decomposition
\begin{equation}
\psi\left(A^{*}B\right)=\int_{\Omega}\left|W_{\psi}\left(A\right)\left\rangle \right\langle W_{\psi}\left(B\right)\right|d\mathbb{P}\label{eq:d7}
\end{equation}
where $\left\{ W_{\psi}\left(A\right)\right\} _{A\in\mathfrak{A}}$
is a mean-zero, $H$-valued Gaussian process, realized in some probability
space $\left(\Omega,\mathbb{P}\right)$.
\end{cor}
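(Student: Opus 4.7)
The plan is to recognize the CP map $\psi$ as the ``diagonal at $I$'' of the operator-valued p.d. kernel $K_\psi(A,B) := \psi(A^{*}B)$ already introduced in \prettyref{cor:d1}, and then invoke the Gaussian realization machinery of \prettyref{sec:2} verbatim. With this reduction the corollary becomes essentially a transcription of the general theory into the CP-map setting, with no new analytic content.

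First, I would verify that $K_\psi$ is a $\mathcal{L}(H)$-valued p.d. kernel on $\mathfrak{A} \times \mathfrak{A}$ in the sense of \eqref{eq:A5}. For finite families $(A_i)_{i=1}^n \subset \mathfrak{A}$ and $(a_i)_{i=1}^n \subset H$, the matrix $(A_i^{*} A_j)_{i,j}$ is positive in $M_n(\mathfrak{A})$, so complete positivity of $\psi$ gives $\sum_{i,j} \langle a_i, \psi(A_i^{*} A_j) a_j\rangle_H \geq 0$. This is the only place where complete positivity enters.

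Next, I would apply the second theorem of \prettyref{sec:2} to $K_\psi$: this yields a zero-mean $H$-valued Gaussian process $\{W_\psi(A)\}_{A \in \mathfrak{A}}$ on some probability space $(\Omega, \mathbb{P})$ whose covariance is $K_\psi$. To make it concrete, I would take the canonical factorization $K_\psi(A,B) = V_\psi(A)^{*} V_\psi(B)$ from \prettyref{thm:b2}, an ONB $(\varphi_i)$ of the associated scalar RKHS $H_{\tilde{K}_\psi}$, and i.i.d.\ $N(0,1)$ variables $(Z_i)$, and set
\[
W_\psi(A) := \sum_i \bigl(V_\psi(A)^{*} \varphi_i\bigr) Z_i,
\]
in direct analogy with \eqref{eq:d8}.

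Finally, the identity \eqref{eq:d7} is to be read in the weak (matrix-element) sense, that is, as $\mathbb{E}\bigl[\langle a, W_\psi(A)\rangle_H \langle W_\psi(B), b\rangle_H\bigr] = \langle a, \psi(A^{*} B) b\rangle_H$ for all $a,b \in H$ and $A,B \in \mathfrak{A}$. But this is exactly \prettyref{thm:c3} applied to $K_\psi$, so no further calculation is needed. The only mild obstacle worth flagging is the convergence of the defining Gaussian series for $W_\psi(A)$ as an $H$-valued random variable, together with the fact that the distribution of $\{W_\psi(A)\}$ is determined only by its first two moments; but both points are already handled in \prettyref{sec:2} via the central limit theorem and are recorded in the remark following \prettyref{thm:c3}.
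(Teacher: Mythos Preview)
Your proposal is correct and follows essentially the same route as the paper: both define $W_\psi(A)=\sum_i\bigl(V_\psi(A)^{*}\varphi_i\bigr)Z_i$ and verify the covariance identity via the ONB expansion, the only cosmetic difference being that the paper writes $V_\psi(A)$ in its Stinespring form $\pi_\psi(A)V_\psi$ (cf.\ \eqref{eq:d5}) and redoes the calculation of \prettyref{thm:c3} explicitly rather than citing it.
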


\begin{proof}
Let $\left(V_{\psi},\pi_{\psi},\mathscr{K}_{\psi}\right)$ be the
minimal Stinespring dilation in \prettyref{cor:d1}, i.e., $\mathscr{K}_{\psi}=H_{\tilde{K}_{\psi}}=$
the RKHS of $\tilde{K}_{\psi}$. Then, 
\begin{align*}
\psi\left(A^{*}B\right) & =V_{\psi}^{*}\pi_{\psi}\left(A^{*}B\right)V_{\psi}\\
 & =\left(\pi_{\psi}\left(A\right)V_{\psi}\right)^{*}\left(\pi_{\psi}\left(B\right)V_{\psi}\right)\\
 & =\sum_{i}\left|\left(\pi_{\psi}\left(A\right)V_{\psi}\right)^{*}\varphi_{i}\left\rangle \right\langle \left(\pi_{\psi}\left(B\right)V_{\psi}\right)^{*}\varphi_{i}\right|
\end{align*}
where $\left(\varphi_{i}\right)$ is an ONB in $\mathscr{K}_{\psi}$.
Setting 
\[
W_{\psi}\left(A\right):=\sum_{i}\left(\left(\pi_{\psi}\left(A\right)V_{\psi}\right)^{*}\varphi_{i}\right)Z_{i}
\]
with $\left\{ Z_{i}\right\} $ i.i.d. $N\left(0,1\right)$ as above,
we get 
\[
\left\langle a,\psi\left(A^{*}B\right)b\right\rangle _{H}=\mathbb{E}\left[\left\langle a,W_{\psi}\left(A\right)\right\rangle _{H}\left\langle W_{\psi}\left(B\right),b\right\rangle _{H}\right]
\]
for all $a,b\in H$, and $A,B\in\mathfrak{A}$, which is \eqref{eq:d7}.
\end{proof}
\bibliographystyle{amsalpha}
\bibliography{ref}

\end{document}